\date{}
\newtheorem{theorem}{Theorem}
\newtheorem{prop}{Proposition}
\newtheorem{lemma}{Lemma}
\newtheorem{cor}{Corollary}
\theoremstyle{definition}
\newtheorem{exmp}{Example}
\begin{document}
\author{Edyta Bartnicka}
  \title{Tops of graphs of projective codes}
  \maketitle

  \begin{abstract}
Let $\Gamma_k(V)$ be the Grassmann graph whose vertex set  ${\mathcal G}_{k}(V)$  is formed by all $k$-dimensional subspaces of an $n$-dimensional vector space $V$ over the finite field $F_q$ consisting of $q$ elements. Denote by  $\Pi[n,k]_q$ the subgraph of $\Gamma_k(V)$ formed by projective  codes.  We give a complete description of cliques $\langle U]^{\Pi}_{k}$ of  $\Pi[n,k]_q$ consisting of all $k$-dimensional projective codes contained in a fixed  $(k+1)$-dimensional subspace of $V$. We show when and in how many lines of  ${\mathcal G}_{k}(V)$ they are contained. Next we prove that $\langle U]^{\Pi}_{k}$ is a maximal clique of $\Pi[n,k]_q$ exactly if it
  is contained in at most one line of  ${\mathcal G}_{k}(V)$.
\end{abstract}

\paragraph{\rm\bf Keywords}
Projective codes, Grassmann graphs, Maximal cliques, Tops

\paragraph{\rm\bf Mathematics Subject Classification:}
51E22, 51E20, 94B27

\section{Introduction}
%Grassmann graphs, which have already been discussed  in a number of books and papers (see, e.g.,
%\cite{Pankov2010, Pankov2015}), are important for many reasons, for instance, they are classical examples of distance regular graphs (for more details
%see \cite{brouwer}).

The Grassmann graph  $\Gamma_k(V)$ formed by the set ${\mathcal G}_{k}(V)$ of all $k$-dimensional subspaces of an $n$-dimensional vector space $V$ over the $q$-element field $F_q$ can be identified with the graph of all linear  codes $[n, k]_q$, where two distinct linear codes $[n, k]_q$ are connected by an edge if %they have the maximal possible number of common codewords, i.e.
their intersection is $(k-1)$-dimensional. We assume that  $1<k<n-1$ since the Grassmann graph is complete for $k=1$ and $k=n-1$. The subgraphs of the Grassmann graph  have been investigated e.g., in
\cite{CGK, KP2, Pankov2023}.
Maximal cliques in these graphs  are considered in \cite{%BM,
KP4} for example.

We focus on the subgraph $\Pi[n,k]_q$ of the Grassmann graph formed by the set $\Pi(n,k)_q$ of all projective codes $[n,k]_{q}$, i.e., linear codes such that columns in their generator matrices are non-zero and mutually non-proportional. The graph of projective codes have been discussed in \cite{KP3}.
In this article, we investigate cliques $\langle U]^{\Pi}_{k}$ of  $\Pi[n,k]_q$ consisting of all $k$-dimensional projective codes contained in a fixed  $(k+1)$-dimensional subspace $U$ of $V$. We are particularly interested in those $\langle U]^{\Pi}_{k}$  which are maximal cliques of $\Pi[n,k]_q$.

Maximal cliques  of $\Pi[n,k]_q$ are intersections of maximal cliques of $\Gamma_k(V)$ with the set $\Pi(n,k)_q$. If $1<k<n-1$, then any maximal clique of the Grassmann graph is precisely one of the following types: stars $[S\rangle_k$ (made up of all $k$-dimensional subspaces of $V$ containing a fixed $(k-1)$-dimensional subspace $S$) or tops (formed by all $k$-dimensional subspaces of $V$ contained in a fixed $(k+1)$-dimensional subspace $U$). Any  intersection of a star or a top with the set $\Pi(n, k)_q$ is a clique (if it is non-empty), however it need not be a maximal clique.  We call  such intersection a star or a top  of $\Pi[n,k]_q$, respectively, if it is a maximal clique of $\Pi[n,k]_q$. We refer to \cite{B} for the description of the stars of $\Pi[n,k]_q$. Here we  provide the detailed characterization of the tops  of $\Pi[n, k]_q$. %which are intersections of stars of $\Gamma_k(V)$ with the set $\Pi(n,k)_q$. This intersection is denoted by $[S\rangle_k^{\Pi}$.
The starting point of our investigation is introducing the subspace $Y$ of $(k+1)$-dimensional vector space $V^{k+1}$ over the finite field $F_q$. This subspace is generated by all vectors which are not linear combinations of precisely two columns of the generator matrix for a fixed projective code $[n,k+1]_{q}=U$.  Next, we prove that the set $\langle U]^{\Pi}_{k}$ is contained in a line of ${\mathcal G}_{k}(V)$  in the case of $\dim Y\leqslant2$  (Theorem \ref{liney}). Finally, we show that $\langle U]^{\Pi}_{k}$ is a top of $\Pi[n,k]_q$  when it is not contained in any line of ${\mathcal G}_{k}(V)$ or it is a star of $\Pi[n,k]_q$ simultaneously (Theorem \ref{projective top}). %Also, we indicate $\langle U]^{\Pi}_{k}$ which are stars and tops of $\Pi[n,k]_q$ simultaneously.
 Examples of maximal and non-maximal cliques $\langle U]^{\Pi}_{k}$
 %for different cases of $\dim W$
 are also given.

\section{Preliminaries}
Let $F_q$ be the finite field consisting of $q$ elements  and let  $V=F_{q}^{n}$ be the  $n$-dimensional vector space  over $F_q$. Consider  the {\it Grassmann graph} $\Gamma_{k}(V)$ whose the set of vertices is Grassmannian  ${\mathcal G}_{k}(V)$ consisting of all  $k$-dimensional subspaces of $V$.
This is the simple and connected graph whose  vertices are joined by an edge if their intersection is $(k-1)$-dimensional.
Any element of ${\mathcal G}_{k}(V)$ is interpreted as a {\it linear code} $[n,k]_{q}$.  In practice, only non-degenerate linear codes are useful. A linear code is said to be  {\it non-degenerate} if
its generator matrices do not contain zero columns.  If
 generator matrices for linear code contain neither zero nor proportional columns, then such code is called  {\it projective}.

Let $S, U$ be subspaces of $V$ such that $S\subset U\subseteq V$ and $\dim S<k<\dim U$. The set $K$ of all $k$-dimensional subspaces  of $V$ such that $S\subset K\subset U$ is denoted by $[S,U]_{k}$. We write  $\langle U]_{k}$ and $[S\rangle_{k}$ in the  cases when $S=0$ or $U=V$, respectively. If $$\dim S=k-1\;\mbox{ and }\;\dim U=k+1,$$ then $[S,U]_{k}$ is called a {\it line}.

Recall that a clique in
an undirected graph is a  set of  vertices with any two distinct
elements  joined by an edge. %, or in other words, the subgraph induced by these elements is complete.
 A clique  is said to be {\it maximal} if it is not
properly contained in any clique.
We suppose  that $1<k<n-1$  throughout the paper (since the Grassmann graph is complete if $k=1$ or $k=n-1$) and then every  maximal clique of $\Gamma_{k}(V)$ is of one of the following types:\begin{enumerate}
\item[$\bullet$]
a {\it star}  $[S\rangle_{k}$ consisting of all $k$-dimensional subspaces of $V$ containing a fixed subspace $S\in {\mathcal G}_{k-1}(V)$,
or
\item[$\bullet$] a {\it top}  $\langle U]_{k}$ consisting of all $k$-dimensional subspaces of $V$ contained in a fixed  subspace $U\in {\mathcal G}_{k+1}(V)$.\end{enumerate}
The intersection of $[S\rangle_{k}$ and $\langle U]_{k}$  is non-empty exactly if
it is a line of ${\mathcal G}_k(V)$ and then it contains precisely $q+1$ elements. The intersection of two distinct maximal cliques of the  same type  contains at most one vertex.

Write $\Pi[n, k]_q$  for the restriction of the  Grassmann graph to the set $\Pi(n, k)_q$ of all projective codes. This is the simple  graph  which is connected when $q\geq \binom{n}{2}$ (see \cite[Theorem 1]{KP3}).
Every clique of $\Pi[n, k]_q$ is a  clique of $\Gamma_{k}(V)$ and  every maximal clique of $\Pi[n, k]_q$ is
 the intersection of $\Pi(n,k)_{q}$ with a maximal clique of $\Gamma_{k}(V)$.
The intersections $[S\rangle_{k}\cap{\Pi}(n,k)_q$ and $\langle U]_{k}\cap{\Pi}(n,k)_q$ are denoted by $[S\rangle^{\Pi}_{k}$ and $\langle U]^{\Pi}_{k}$, respectively.
They both
may be empty or  non-maximal cliques of $\Pi[n, k]_q$. We say that $[S\rangle^{\Pi}_{k}$ is a {\it star} of $\Pi[n, k]_q$ or $\langle U]^{\Pi}_{k}$ is a {\it top} of $\Pi[n, k]_q$ only if they are maximal cliques of $\Pi[n, k]_q$.

Recall that $V$ contains precisely
$$\genfrac{[}{]}{0pt}{}{n}{k}_q=
\frac{(q^n-1)(q^{n-1}-1)\cdots(q^{n-k+1}-1)}{(q-1)(q^2-1)\cdots(q^k-1)}$$
$k$-dimensional subspaces. Hence the number of $1$-dimensional subspaces of $V$, which is also the number of hyperplanes in $V$, is equal
$$\genfrac{[}{]}{0pt}{}{n}{1}_q=\genfrac{[}{]}{0pt}{}{n}{n-1}_q=
[n]_{q}=\frac{q^n-1}{q-1}.$$
Furthermore, the total number of $(k+1)$-dimensional subspaces of $V$ containing a fixed $k$-dimensional subspace of $V$ is  $[n-k]_q$.

%\noindent
%The total number of elements of  $\langle U]_{k}$ is $[k+1]_{q}$.
\section{Tops of graphs of projective codes}%$\boldsymbol{\Gamma[n, k]_q}$}

It is obvious that if a $(k+1)$-dimensional subspace $U$ of $V$ does not belong to $\Pi(n,k+1)_q$, then $\langle U]^{\Pi}_{k}$ is empty. That is why throughout this paper we shall only study $U\in \Pi(n,k+1)_q$. A generator matrix of $U$, i.e., a matrix whose rows $v_{1},\dots,v_{k+1}$  form a basis for $U$, will be denoted by $M$.

It is clear that $\langle U]^{\Pi}_{k}$ is the subset of the set $\langle U]^{c}_{k}$ of all $k$-dimensional non-degenerate linear codes contained in $U$. According to \cite{KP2} $\langle U]^{c}_{k}$ is linked with the subset $W$ of $V^{k+1}=F_{q}^{k+1}$ formed by all non-zero vectors $w=(w_{1},\dots,w_{k+1})$ whose coordinates are not proportional to coordinates of any column of $M$. Namely,  $\langle U]^{c}_{k}$ consists of all $C(w), w\in W$, where  for every $w\in W$ $C(w)$ is defined  as the set of all
vectors $\sum^{k+1}_{i=1}a_{i}v_{i}$ with scalars $a_{1},\dots, a_{k+1}\in F_q$ satisfying the equality
$$\sum^{k+1}_{i=1}a_{i}w_{i}=0.$$

Let now $Y\subseteq W$ be the set of all elements $w\in W$ such that $w\neq \alpha l_i+\beta l_j$ for all $\alpha, \beta\in F_q\setminus\{0\}$ and all columns $l_i, l_j$ of $M$. We will write $C(y)$ instead of $C(w)$ in the case of $y\in Y\subseteq W$.
 \begin{lemma}\label{ukpi}
   The set $\langle U]^{\Pi}_{k}$ consists of all $C(y), y\in Y$.
 \end{lemma}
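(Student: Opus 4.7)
The plan is to leverage the cited characterization $\langle U]^c_k = \{C(w) : w \in W\}$ from \cite{KP2}. Since $\langle U]^\Pi_k = \langle U]^c_k \cap \Pi(n,k)_q$, it suffices to prove that for $w \in W$, the code $C(w)$ is projective if and only if $w \in Y$.

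The key computation is to exhibit a generator matrix of $C(w)$ explicitly. Pick a basis $a^{(1)}, \dots, a^{(k)}$ of the hyperplane $w^{\perp} = \{a \in F_q^{k+1}: \sum_i a_i w_i = 0\}$. Then the vectors $u^{(r)} = \sum_i a^{(r)}_i v_i$ form a basis of $C(w)$, and a direct calculation shows that the $j$-th column of the resulting $k \times n$ generator matrix equals $(\langle a^{(r)}, l_j\rangle)_{r=1}^{k}$, i.e.\ the image of $l_j$ under the natural restriction map $(F_q^{k+1})^{*} \to (w^{\perp})^{*}$.

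Two consequences fall out. First, the $j$-th column vanishes precisely when $l_j$ is a scalar multiple of $w$; this is exactly what the condition $w \in W$ excludes, recovering non-degeneracy. Second, for $j \neq j'$ the columns indexed by $j$ and $j'$ are proportional precisely when the restrictions of $l_j, l_{j'}$ to $w^{\perp}$ are proportional, equivalently when $l_{j'} = c\, l_j + d\, w$ for some scalars $c, d$. The case $d = 0$ is ruled out because $M$ has no pair of proportional columns (as $U$ is projective), so $d \neq 0$ and we may solve for $w = \alpha l_j + \beta l_{j'}$ with $\beta = d^{-1} \neq 0$; moreover $\alpha \neq 0$ because otherwise $w = \beta l_{j'}$ would contradict $w \in W$. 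Conversely, any such expression $w = \alpha l_j + \beta l_{j'}$ with $\alpha, \beta \neq 0$ yields a proportionality between the two columns. Hence $C(w)$ is projective if and only if no pair $j \neq j'$ produces such a combination, which is the defining condition of $Y$.

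The main obstacle I foresee is bookkeeping rather than substance: setting up the generator matrix of $C(w)$ cleanly, and then matching the proportionality of two columns with the two-term combination $\alpha l_i + \beta l_j$ appearing in the definition of $Y$. Once this dictionary is in place, the lemma reduces to a one-line verification.
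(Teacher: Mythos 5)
Your proposal is correct and follows essentially the same route as the paper: both compute a generator matrix of $C(w)$ whose $j$-th column is obtained by pairing the $j$-th column $l_j$ of $M$ with a basis of $w^\perp$, and both identify proportionality of two columns of that matrix with $w$ being a combination $\alpha l_i+\beta l_j$ with $\alpha,\beta\neq 0$. Your write-up is in fact a bit more complete than the paper's, since it explicitly treats the converse direction and rules out the degenerate cases $d=0$ and $\alpha=0$ using projectivity of $U$ and the condition $w\in W$, whereas the paper leaves the ``exactly if'' largely to the reader.
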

 \begin{proof}
   Let $M=\left[
\begin{array}{cccc}
v_1\\
\vdots\\
v_{k+1}\\
\end{array}
\right]=\left[
\begin{array}{cccc}
v_{11}& \cdots &v_{1n}\\
\vdots&\ddots&\vdots\\
v_{(k+1)1}& \cdots &v_{(k+1)n}\\
\end{array}
\right]$, $w=\left[
\begin{array}{cccc}
w_1\\
\vdots\\
w_{k+1}\\
\end{array}
\right]$ and \begin{align*}
C&=\left[\begin{array}{c}
a_{11}v_{1}+ \cdots +a_{1(k+1)}v_{k+1}\\
\vdots\\
a_{k1}v_{1}+ \cdots +a_{k(k+1)}v_{k+1}\\
\end{array}
\right]=\\
              &=\left[\begin{array}{ccc}
a_{11}v_{11}+ \cdots +a_{1(k+1)}v_{(k+1)1}& \cdots &a_{11}v_{1n}+ \cdots +a_{1(k+1)}v_{(k+1)n}\\
\vdots&\ddots&\vdots\\
a_{k1}v_{11}+ \cdots +a_{k(k+1)}v_{(k+1)1}& \cdots &a_{k1}v_{1n}+ \cdots +a_{k(k+1)}v_{(k+1)n}\\
\end{array}
\right]      \end{align*} be a generator matrix for $C(w)$. Equivalently, $
a_{s1}w_{1}+ \cdots +a_{s(k+1)}w_{k+1}=0$ for all $s=1,2, \dots, k$.

\noindent
Suppose that there exist $\alpha, \beta\in F_q\setminus\{0\}$ and the $i$-th, the  $j$-th columns $l_i, l_j$ of $M$ such that $w=\alpha l_i+\beta l_j$, that is,  $$\left[
\begin{array}{cccc}
w_1\\
\vdots\\
w_{k+1}\\
\end{array}
\right]=\left[
\begin{array}{cccc}
\alpha v_{1i}+\beta v_{1j}\\
\vdots\\
\alpha v_{(k+1)i}+\beta v_{(k+1)j}\\
\end{array}
\right].$$
This means that for all $s=1,2, \dots, k$
\begin{align*}
&a_{s1}(\alpha v_{1i}+\beta v_{1j})+ \cdots +a_{s(k+1)}(\alpha v_{(k+1)i}+\beta v_{(k+1)j})=\\
&=\alpha( a_{s1}v_{1i}+\cdots+ a_{s(k+1)}v_{(k+1)i})        +     \beta (a_{s1}v_{1j}+ \cdots +a_{s(k+1)}v_{(k+1)j})=
0\end{align*}
or equivalently $$a_{s1}v_{1i}+\cdots+ a_{s(k+1)}v_{(k+1)i})=-\alpha^{-1}\beta(a_{s1}v_{1j}+ \cdots +a_{s(k+1)}v_{(k+1)j}).$$
This shows that the $i$-th and the  $j$-th columns of $C(w)$ are proportional exactly if $w$ is a linear combination of the $i$-th and the  $j$-th columns of $M$.
 \end{proof}

 \begin{prop}\label{dimCy}
 Let $Y\neq \emptyset$ and let $Y^\perp$ be the orthogonal complement of the subspace $\langle Y\rangle$ of $V^{k+1}$. Then
   $$\dim Y^\perp=\dim\bigcap_{y\in Y}C(y).$$
\end{prop}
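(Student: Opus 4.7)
The plan is to reduce the statement to a straightforward dimension count by transporting the whole problem to $V^{k+1}=F_q^{k+1}$ via the coordinate isomorphism associated with the basis $v_1,\dots,v_{k+1}$ of $U$.

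First I would introduce the linear map $\phi\colon F_q^{k+1}\to U$ defined by $\phi(a_1,\dots,a_{k+1})=\sum_{i=1}^{k+1}a_iv_i$. Since $v_1,\dots,v_{k+1}$ form a basis of $U$, $\phi$ is a linear isomorphism. The key observation, immediate from the definition of $C(y)$ recalled just before the proposition, is that
\[
C(y)=\phi\bigl(\{a\in F_q^{k+1}:\langle a,y\rangle=0\}\bigr)=\phi(y^{\perp}),
\]
where $\langle\cdot,\cdot\rangle$ denotes the standard bilinear form on $V^{k+1}$.

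Next I would use two elementary facts from linear algebra over $F_q$: (i) for any subset $Y\subseteq V^{k+1}$ the orthogonal complement satisfies $\bigcap_{y\in Y}y^{\perp}=\langle Y\rangle^{\perp}=Y^{\perp}$, because a vector is orthogonal to every element of $Y$ exactly when it is orthogonal to the span; and (ii) an injective linear map commutes with arbitrary intersections of subspaces. Combining these gives
\[
\bigcap_{y\in Y}C(y)=\bigcap_{y\in Y}\phi(y^{\perp})=\phi\!\left(\bigcap_{y\in Y}y^{\perp}\right)=\phi(Y^{\perp}).
\]

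Finally, since $\phi$ is a linear isomorphism it preserves dimension, so $\dim\bigcap_{y\in Y}C(y)=\dim\phi(Y^{\perp})=\dim Y^{\perp}$, which is the desired equality. There is no real obstacle here: the whole argument is a transport-of-structure through $\phi$, and the only point that deserves a line of justification is that the intersection can be pulled out of $\phi$, which holds because $\phi$ is injective.
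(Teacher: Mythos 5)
Your proof is correct and follows essentially the same route as the paper: the author also identifies $\bigcap_{y\in Y}C(y)$ with the image of $Y^\perp$ under the coordinate map $a\mapsto\sum_{i=1}^{k+1}a_iv_i$, just stated more tersely as the equivalence ``$a\in Y^\perp$ iff $\sum a_iv_i\in\bigcap_{y\in Y}C(y)$''. Your version is slightly more careful in spelling out why the intersection passes through the injective map $\phi$, which is a welcome clarification rather than a deviation.
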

\begin{proof}
Suppose that $a=\left[
\begin{array}{cccc}
a_1\\
\vdots\\
a_{k+1}\\
\end{array}
\right] \in Y^\perp$, i.e., $\sum^{k+1}_{i=1}a_{i}y_{i}=0$ holds for all\linebreak $y=\left[
\begin{array}{cccc}
y_1\\
\vdots\\
y_{k+1}\\
\end{array}
\right]\in Y$. This is equivalent to saying that $\sum^{k+1}_{i=1}a_{i}y_{i}\in \bigcap_{y\in Y}C(y)$.
\end{proof}
\begin{theorem}\label{liney}
  The set $\langle U]^{\Pi}_{k}$ is contained in a line of  ${\mathcal G}_{k}(V)$ if and only if $\dim Y\leqslant2$.
  Additionally, $\langle U]^{\Pi}_{k}$ is contained in
  \begin{enumerate}[a)]
    \item\label{dim0} all lines of ${\mathcal G}_{k}(V)$  if and only if $\dim \langle Y\rangle=0$.
    \item\label{dim1} precisely $[k]_q[n-k]_q$ lines of ${\mathcal G}_{k}(V)$  if and only if $\dim \langle Y\rangle=1$.
    \item\label{dim2} precisely one line of ${\mathcal G}_{k}(V)$  if and only if $\dim \langle Y\rangle=2$. The line is of the form $[\bigcap_{y\in Y}C(y), U]_k$.
  \end{enumerate}
\end{theorem}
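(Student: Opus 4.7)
The plan is to translate the geometric question ``$\langle U]^{\Pi}_{k}\subseteq[S',U']_{k}$'' into the algebraic condition that a single $(k-1)$-dimensional subspace sits inside every $C(y)$, and then feed this through Proposition \ref{dimCy}. The first key observation is that if $\langle U]^{\Pi}_{k}$ contains two distinct codes $K_{1},K_{2}$ and both lie on a line $[S',U']_{k}$, then $K_{1}+K_{2}\subseteq U\cap U'$; since $K_{1}\ne K_{2}$ are $k$-dimensional, their sum is $(k+1)$-dimensional, and since $\dim U=\dim U'=k+1$ this forces $U'=U$. Hence every relevant line has the form $[S',U]_{k}$ with $\dim S'=k-1$ and $S'\subset U$.

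Next I would note that $[S',U]_{k}$ contains every $C(y)$, $y\in Y$, if and only if $S'\subseteq C(y)$ for all $y\in Y$, equivalently $S'\subseteq\bigcap_{y\in Y}C(y)$. Such an $S'$ of dimension $k-1$ exists precisely when $\dim\bigcap_{y\in Y}C(y)\geqslant k-1$. By Proposition \ref{dimCy} this dimension equals $\dim Y^{\perp}=(k+1)-\dim\langle Y\rangle$, so the line-condition is equivalent to $\dim\langle Y\rangle\leqslant 2$. This settles the main equivalence.

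For the enumeration I would split by $\dim\langle Y\rangle$. If $\dim\langle Y\rangle=0$, then $Y=\emptyset$ (as $W$ contains no zero vector), so $\langle U]^{\Pi}_{k}=\emptyset$ sits vacuously in every line. If $\dim\langle Y\rangle=1$, all $y\in Y$ are mutually proportional, so all $C(y)$ coincide with a single $k$-dimensional subspace $K=\bigcap_{y\in Y}C(y)$, and $\langle U]^{\Pi}_{k}=\{K\}$; the lines containing $K$ are exactly the $[S',U']_{k}$ with $S'\subset K\subset U'$, giving $[k]_{q}$ choices of hyperplane $S'$ in $K$ and $[n-k]_{q}$ choices of $(k+1)$-dimensional $U'\supset K$, that is $[k]_{q}[n-k]_{q}$ lines in total. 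If $\dim\langle Y\rangle=2$, then $S:=\bigcap_{y\in Y}C(y)$ has dimension exactly $k-1$, and $Y$ contains two non-proportional vectors, hence $|\langle U]^{\Pi}_{k}|\geqslant 2$; by the first paragraph any line containing $\langle U]^{\Pi}_{k}$ has top $U$ and some bottom $S'\subseteq S$ with $\dim S'=k-1=\dim S$, so $S'=S$ and the line $[S,U]_{k}$ is unique.

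The only nonroutine step is the opening observation that forces $U'=U$ in the presence of two distinct codes; the rest is a dimension count via Proposition \ref{dimCy} together with the standard Gaussian coefficient for $(k+1)$-dimensional subspaces through a fixed $k$-dimensional one. I would be careful to note explicitly that $C(y)$ depends only on the $1$-dimensional space spanned by $y$, which underpins both the transition from $Y$ to $\langle Y\rangle$ and the conclusion that non-proportional elements of $Y$ yield distinct codes in $\langle U]^{\Pi}_{k}$.
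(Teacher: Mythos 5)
Your proposal is correct and follows essentially the same route as the paper: both reduce ``contained in a line'' to the existence of a $(k-1)$-dimensional subspace inside $\bigcap_{y\in Y}C(y)$ via Proposition \ref{dimCy}, then split into the cases $\dim\langle Y\rangle=0,1,2,>2$ with the same counts. The only difference is cosmetic: you justify that a line through two distinct members of $\langle U]^{\Pi}_{k}$ must have top $U$ by the explicit dimension argument on $K_1+K_2$, where the paper instead invokes the standard fact that two distinct tops of ${\mathcal G}_k(V)$ share at most one vertex.
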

  \begin{proof}
    Assume first that $\dim \langle Y\rangle>2$. By Proposition \ref{dimCy} this is equivalent to saying that $\dim \bigcap_{y\in Y}C(y)<k-1$. This means that there is no any $(k-1)$-dimensional subspace $S$ of $V$ such that  $\langle U]^{\Pi}_{k}\subseteq [S, U]_k$. Thus $\langle U]^{\Pi}_{k}$ is not contained in any line of  ${\mathcal G}_{k}(V)$.

    \noindent
    We will now consider other cases of $\dim \langle Y\rangle$.
    \begin{enumerate}[a)]
    \item $\dim \langle Y\rangle=0\Leftrightarrow\langle U]^{\Pi}_{k}=\emptyset$. Clearly, $\langle U]^{\Pi}_{k}$ is contained in
   all lines of ${\mathcal G}_{k}(V)$.
    \item $\dim \langle Y\rangle=1\Leftrightarrow|\langle U]^{\Pi}_{k}|=1$ and there exist $[k]_q$ $(k-1)$-dimensional subspaces $S$ of $C(y)\in \langle U]^{\Pi}_{k}$. Moreover, there exist $[n-k]_q$ $(k+1)$-dimensional subspaces $U'$ of $V$ containing $C(y)$. Therefore there exist precisely $[k]_q[n-k]_q$ lines $[S, U']_k$ containing $\langle U]^{\Pi}_{k}$.
    \item $\dim \langle Y\rangle=2\Leftrightarrow\dim \bigcap_{y\in Y}C(y)=k-1$ by Proposition \ref{dimCy}. In other words,  $\bigcap_{y\in Y}C(y)$ is the only $(k-1)$-dimensional subspace of $V$ contained in any  $C(y)\in \langle U]^{\Pi}_{k}$. Furthermore, $|\langle U]^{\Pi}_{k}|\geqslant2$ and hence $\langle U]^{\Pi}_{k}\nsubseteq\langle U']^{\Pi}_{k}$ for any $(k+1)$-dimensional subspaces $U'\neq U$. Consequently, $[\bigcap_{y\in Y}C(y), U]_k$ is the only line of ${\mathcal G}_{k}(V)$ containing $\langle U]^{\Pi}_{k}$.
  \end{enumerate}
  \end{proof}

  \begin{theorem}\label{projective top}
  The set $\langle U]^{\Pi}_{k}$ is a top of $\Pi[n,k]_{q}$ if and only if it
  is not contained in any line of  ${\mathcal G}_{k}(V)$ or $\bigcap_{y\in Y}C(y)$ is a $(k-1)$-dimensional subspace of $V$ such that $\langle U]^{\Pi}_{k}=[\bigcap_{y\in Y}C(y)\rangle^{\Pi}_{k}$.
\end{theorem}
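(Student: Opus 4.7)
The plan is to use the classification of maximal cliques of $\Pi[n,k]_q$ as intersections of $\Pi(n,k)_q$ with stars or tops of $\Gamma_k(V)$: the set $\langle U]^{\Pi}_k$ fails to be a top exactly when it is properly contained in some $\langle U']^{\Pi}_k$ with $U'\neq U$, or in some star $[S\rangle^{\Pi}_k$. I would first record two auxiliary observations. Observation (A): $\langle U]^{\Pi}_k\subseteq\langle U']^{\Pi}_k$ with $U'\neq U$ forces $|\langle U]^{\Pi}_k|\leq 1$, since any two distinct $k$-dimensional subspaces of $U$ already span $U$ and cannot both lie in a different $(k+1)$-dimensional subspace. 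Observation (B): by Lemma \ref{ukpi}, $\langle U]^{\Pi}_k\subseteq[S\rangle^{\Pi}_k$ is equivalent to $S\subseteq\bigcap_{y\in Y}C(y)$, so such an $S$ of dimension $k-1$ exists iff $\dim\bigcap_{y\in Y}C(y)\geq k-1$, which by Proposition \ref{dimCy} is equivalent to $\dim\langle Y\rangle\leq 2$.

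With these in hand, I would split cases according to $\dim\langle Y\rangle$ using Theorem \ref{liney}. If $\dim\langle Y\rangle>2$, then $\langle U]^{\Pi}_k$ is contained in no line of $\mathcal{G}_k(V)$; observation (B) rules out any star-containment, and since $Y$ spans a space of dimension at least three it contains three linearly independent vectors yielding three distinct codes $C(y)$, so $|\langle U]^{\Pi}_k|\geq 2$ and observation (A) also blocks any nontrivial top-containment. Hence $\langle U]^{\Pi}_k$ is a top. If $\dim\langle Y\rangle=2$, then $S_0:=\bigcap_{y\in Y}C(y)$ has dimension $k-1$, the inclusion $\langle U]^{\Pi}_k\subseteq[S_0\rangle^{\Pi}_k$ holds automatically, and $|\langle U]^{\Pi}_k|\geq 2$ by Theorem \ref{liney}(c) again blocks the top route; so maximality is equivalent to the equality $\langle U]^{\Pi}_k=[S_0\rangle^{\Pi}_k$, matching the second alternative in the statement. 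For the backward implication here I would further check that $[S_0\rangle^{\Pi}_k$ is itself maximal in $\Pi[n,k]_q$ by reapplying observations (A) and (B) to the star in place of the top.

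The remaining case $\dim\langle Y\rangle\leq 1$ gives $|\langle U]^{\Pi}_k|\leq 1$, and I must show such a set is never a top. The empty case is immediate; in the singleton case $\{C\}$ I would produce a projective neighbor of $C$, that is, another projective $k$-code meeting $C$ in a $(k-1)$-dimensional subspace, so that $\{C\}$ sits strictly inside a larger clique. This last step is the main obstacle I anticipate: one has to rule out that the unique projective $k$-code inside $U$ is isolated in $\Pi[n,k]_q$, which should follow from a column-counting argument using the constraints on $M$ that force $\dim\langle Y\rangle=1$, picking a $(k-1)$-subspace $S\subset C$ for which $[S\rangle^{\Pi}_k$ strictly enlarges $\{C\}$.
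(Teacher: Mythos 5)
Your overall strategy is the same as the paper's: rule out containment of $\langle U]^{\Pi}_k$ in another top and in a star, splitting into cases by $\dim\langle Y\rangle$, which via Theorem \ref{liney} is equivalent to the paper's case distinction by the number of lines containing $\langle U]^{\Pi}_k$. Observations (A) and (B) are correct, and your handling of the cases $\dim\langle Y\rangle>2$ and $\dim\langle Y\rangle=2$ (including the verification that $[S_0\rangle^{\Pi}_k$ is itself maximal when it equals $\langle U]^{\Pi}_k$) matches the paper's argument.

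The genuine gap is exactly the step you flag: when $\dim\langle Y\rangle=1$ and $\langle U]^{\Pi}_k=\{C(y)\}$, you must exhibit a projective $[n,k]_q$ code $X\neq C(y)$ with $\dim\bigl(X\cap C(y)\bigr)=k-1$, and your ``column-counting argument'' is only gestured at, not carried out. This is not a routine verification: a priori $C(y)$ could be an isolated vertex of $\Pi[n,k]_q$ (the graph is only guaranteed to be connected for $q\geq\binom{n}{2}$), in which case $\{C(y)\}$ would itself be a maximal clique and the ``only if'' direction of the theorem would fail. The paper closes this step by invoking Proposition 1 of \cite{B}, which guarantees that for a $(k-1)$-dimensional subspace $S\subset C(y)$ the star $[S\rangle^{\Pi}_k$ properly contains $\{C(y)\}$. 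Without that external result, or an explicit construction of a second projective code through some hyperplane of $C(y)$ valid for all admissible $q$ and $n$, your proof is incomplete at precisely this point; the rest of the argument is sound.
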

\begin{proof}
Assume first that $\langle U]^{\Pi}_{k}$
  is contained in more than one line of  ${\mathcal G}_{k}(V)$.  On account of Theorem \ref{liney},  $\langle U]^{\Pi}_{k}$ is an empty set or it contains precisely one element $C(y)$. In the latter case,  there exists a $(k-1)$-dimensional subspace $S$ of $C(y)$ and in view of \cite[Proposition 1]{B} $\langle U]^{\Pi}_{k}$ is a proper subset of $[S\rangle^{\Pi}_{k}$. Therefore $\langle U]^{\Pi}_{k}$ is not a maximal clique of $\Pi[n,k]_{q}$.

  Let now $\langle U]^{\Pi}_{k}$
  be contained in  precisely one line of ${\mathcal G}_{k}(V)$, then by Theorem \ref{liney}, $\dim \langle Y\rangle=2$ and the line is $[\bigcap_{y\in Y}C(y), U]_k$.  In consequence, $\langle U]^{\Pi}_{k}\nsubseteq\langle U']^{\Pi}_{k}$ if $U'\neq U$ (since the intersection of two distinct tops of ${\mathcal G}_{k}(V)$ contains no more than one element) and $\langle U]^{\Pi}_{k}\nsubseteq[S\rangle^{\Pi}_{k}$ if $S\neq \bigcap_{y\in Y}C(y)$. If there exists a $k$-dimensional subspace of $V$ containing $\bigcap_{y\in Y}C(y)$ but not contained in $U$, then  $\langle U]^{\Pi}_{k}\subset[\bigcap_{y\in Y}C(y)\rangle^{\Pi}_{k}$ and is not a maximal clique. Otherwise, $\langle U]^{\Pi}_{k}=[\bigcap_{y\in Y}C(y)\rangle^{\Pi}_{k}$ is a top and a star of $\Pi[n,k]_{q}$.

 If $\langle U]^{\Pi}_{k}$ is not contained in any line of  ${\mathcal G}_{k}(V)$, then we get immediately that $\langle U]^{\Pi}_{k}$ is not contained in any star of $\Pi(n,k)_{q}$. According to Theorem \ref{liney}, $|\langle U]^{\Pi}_{k}|>2$, and so $\langle U]^{\Pi}_{k}$ is not contained in any other top of $\Pi[n,k]_{q}$. Consequently,
$\langle U]^{\Pi}_{k}$ is a maximal clique of $\Pi[n,k]_{q}$.

 %Assume first that $\langle U]^{\Pi}_{k}$
  %is contained in more than one line of  ${\mathcal G}_{k}(V)$. On account of Theorem \ref{liney},  $\langle U]^{\Pi}_{k}$ is an empty set or it contains precisely one element. In the latter case, there exists a $(k-1)$-dimensional subspace $S$ of $C(y)$ and in view of \cite[Proposition 1]{B} $\langle U]^{\Pi}_{k}$ is a proper subset of $[S\rangle^{\Pi}_{k}$. Therefore $\langle U]^{\Pi}_{k}$ is not a maximal clique of $\Pi(n,k)_{q}$.

  %Let now $\langle U]^{\Pi}_{k}$ is not contained in any line of  ${\mathcal G}_{k}(V)$. Then we get immediately that $\langle U]^{\Pi}_{k}$ is not contained in any star of $\Pi(n,k)_{q}$. According to Theorem \ref{liney}, $|\langle U]^{\Pi}_{k}|>2$, and so $\langle U]^{\Pi}_{k}$ is not contained in any other top of $\Pi(n,k)_{q}$. Consequently,
%$\langle U]^{\Pi}_{k}$ is a maximal clique of $\Pi(n,k)_{q}$.

%If $\langle U]^{\Pi}_{k}$
  %is contained in  precisely one line of ${\mathcal G}_{k}(V)$, then by Theorem \ref{liney}, $\dim \langle Y\rangle=2$ and the line is $[\bigcap_{y\in Y}C(y), U]_k$. In consequence, $\langle U]^{\Pi}_{k}\nsubseteq\langle U']^{\Pi}_{k}$ if $U'\neq U$ and $\langle U]^{\Pi}_{k}\nsubseteq[S\rangle^{\Pi}_{k}$ if $S\neq \bigcap_{y\in Y}C(y)$. Thus $\langle U]^{\Pi}_{k}=[\bigcap_{y\in Y}C(y)\rangle^{\Pi}_{k}$ is  a top and a star of $\Pi(n,k)_{q}$.
\end{proof}

\begin{exmp}
  Consider three projective codes $U=[10, 5]_2$ whose generator matrices differ precisely one column and the set $Y$ in each case has different number of elements.
  \begin{enumerate}[a)]
    \item Let $M=\left[\begin{matrix}
                                                        v_1 \\
                                                        v_2 \\
                                                        v_3 \\
                                                        v_4 \\
                                                        v_5
                                                      \end{matrix}\right]=\left[\begin{matrix}
                   1 & 0 & 0 & 0 & 0 & 1 & 0 & 1 & 1 & 0 \\
                   0& 1  & 0 & 0 & 0 & 1 & 0 & 0 & 0 & 1 \\
                   0 & 0 & 1 & 0 & 0 & 0 & 1 & 1 & 0& 0\\
                   0 & 0 & 0 & 1 & 0 & 0 & 1 & 0 & 1 & 0 \\
                   0 & 0 & 0 & 0 & 1 & 0 & 0 & 0 & 1 & 1
                 \end{matrix}\right].$ Then $Y=\left\{\left[\begin{matrix}
                                                        1 \\
                                                        1 \\
                                                        1 \\
                                                        1 \\
                                                        1
                                                      \end{matrix}\right]\right\}$ and $\langle U]^{\Pi}_{4}=\left\{C(y)\right\}$, where $C(y)=\langle v_1+v_2, v_2+v_3, v_3+v_4, v_4+v_5\rangle.$ Thus we can write a generator matrix for $C(y)$ as
                                                      $$\left[\begin{matrix}
                   1 & 1 & 0 & 0 & 0 & 0 & 0 & 1 & 1 & 1 \\
                   0& 1  & 1 & 0 & 0 & 1 & 1 & 1 & 0 & 1 \\
                   0 & 0 & 1 & 1 & 0 & 0 & 0 & 1 & 1& 0\\
                   0 & 0 & 0& 1 & 1 & 0 & 1 & 0 & 0 & 1
                 \end{matrix}\right].$$ It is easy to see that the matrix
                  $$\left[\begin{matrix}
                   1 & 1 & 0 & 0 & 0 & 0 & 0 & 1 & 1 & 1 \\
                   0& 1  & 1 & 0 & 0 & 1 & 1 & 1 & 0 & 1 \\
                   0 & 0 & 1 & 1 & 0 & 0 & 0 & 1 & 1& 0\\
                   0 & 0 & 0& 1 & 1 & 0 & 1 & 1 & 1 & 1
                 \end{matrix}\right]$$ is an example of generator matrix for a projective code $X=[10, 4]_2$ such that $X\nsubseteq U$ and $\dim\left(X\cap C(y)\right)=k-1$. Therefore $\langle U]^{\Pi}_{4}$ is not a top of $\Pi[10,4]_{2}$.
    \item Choose now $M=\left[\begin{matrix}
                                                        v_1 \\
                                                        v_2 \\
                                                        v_3 \\
                                                        v_4 \\
                                                        v_5
                                                      \end{matrix}\right]=\left[\begin{matrix}
                   1 & 0 & 0 & 0 & 0 & 1 & 0 & 1 & 1 & 0 \\
                   0& 1  & 0 & 0 & 0 & 1 & 0 & 0 & 0 & 1 \\
                   0 & 0 & 1 & 0 & 0 & 0 & 1 & 1 & 0& 1\\
                   0 & 0 & 0 & 1 & 0 & 0 & 1 & 0 & 1 & 0 \\
                   0 & 0 & 0 & 0 & 1 & 0 & 0 & 0 & 1 & 0
                 \end{matrix}\right].$ In this case $Y=\left\{\left[\begin{matrix}
                                                        0 \\
                                                        1 \\
                                                        1 \\
                                                        1 \\
                                                        1
                                                      \end{matrix}\right],\left[\begin{matrix}
                                                        1 \\
                                                        1 \\
                                                        1 \\
                                                        0 \\
                                                        1
                                                      \end{matrix}\right]\right\}$ and $\langle U]^{\Pi}_{4}$ consists of two elements $C(y_1)=\langle v_2+v_3, v_2+v_5, v_1+v_2+v_4, v_1\rangle $ and $C(y_2)=\langle v_2+v_3, v_2+v_5, v_1+v_2+v_4, v_4\rangle$.

                 Then $C(y_1)\cap C(y_2)=\langle v_2+v_3, v_2+v_5, v_1+v_2+v_4\rangle $ is generated by
                                                      $$\left[\begin{matrix}
                   0 & 1 & 1 & 0 & 0 & 1 & 1 & 1 & 0 & 0 \\
                   0& 1  & 0 & 0 & 1 & 1 & 0 & 0 & 1 & 1 \\
                   1 & 1 & 0 & 1 & 0 & 0 & 1 & 1 & 0& 1
                 \end{matrix}\right]$$  and the matrix
                 $$\left[\begin{matrix}
                   0 & 1 & 1 & 0 & 0 & 1 & 1 & 1 & 0 & 0 \\
                   0& 1  & 0 & 0 & 1 & 1 & 0 & 0 & 1 & 1 \\
                   1 & 1 & 0 & 1 & 0 & 0 & 1 & 1 & 0& 1\\
                   0 & 0 & 0& 1 & 0 & 0 & 0 & 1 & 1 & 0
                 \end{matrix}\right]$$
                  generates a projective code $X=[10, 4]_2$ which contains $(k-1)$-dimensional subspace $C(y_1)\cap C(y_2)$ and is not contained in $U$. Consequently,
                   $\langle U]^{\Pi}_{k}\subset[C(y_1)\cap C(y_2)\rangle^{\Pi}_{k}$ and is not a maximal clique.
    \item Consider $M=\left[\begin{matrix}
                                                        v_1 \\
                                                        v_2 \\
                                                        v_3 \\
                                                        v_4 \\
                                                        v_5
                                                      \end{matrix}\right]=\left[\begin{matrix}
                   1 & 0 & 0 & 0 & 0 & 1 & 0 & 1 & 1 & 0 \\
                   0& 1  & 0 & 0 & 0 & 1 & 0 & 0 & 0 & 1 \\
                   0 & 0 & 1 & 0 & 0 & 0 & 1 & 1 & 0& 0\\
                   0 & 0 & 0 & 1 & 0 & 0 & 1 & 0 & 1 & 1 \\
                   0 & 0 & 0 & 0 & 1 & 0 & 0 & 0 & 1 & 0
                 \end{matrix}\right].$ Here we get $Y$ consists of four elements:
                 $$y_1=\left[\begin{matrix}
                                                        0 \\
                                                        1 \\
                                                        1 \\
                                                        0 \\
                                                        1
                                                      \end{matrix}\right], y_2=\left[\begin{matrix}
                                                        0 \\
                                                        1 \\
                                                        1 \\
                                                        1 \\
                                                        1
                                                      \end{matrix}\right], y_3=\left[\begin{matrix}
                                                        1 \\
                                                        1 \\
                                                        1 \\
                                                        0 \\
                                                        1
                                                      \end{matrix}\right], y_4=\left[\begin{matrix}
                                                        1 \\
                                                        1 \\
                                                        1 \\
                                                        1 \\
                                                        1
                                                      \end{matrix}\right].$$ We see immediately that $y_1+y_2+y_3=y_4$ and $\dim\langle Y\rangle=3$.

                                                      In consequence,  $\langle U]^{\Pi}_{4}$ is made up by  $C(y_1)=\langle v_2+v_5, v_3+v_5, v_1, v_4\rangle$, $C(y_2)=\langle v_2+v_5, v_3+v_5, v_1, v_1+v_2+v_4\rangle$ and $C(y_3)=\langle v_2+v_5, v_3+v_5, v_4, v_1+v_2+v_4\rangle$ and so it is not contained in any $\langle U']^{\Pi}_{4}$, where $U\neq U'$.  Since $\dim\bigcap_{y_i\in Y}C(y_i)=\dim\langle v_2+v_5, v_3+v_5\rangle=2$, there is no any $(k-1)$-dimensional subspace $S$ such that $\langle U]^{\Pi}_{4}\subseteq [S\rangle^{\Pi}_{4}$ and so $\langle U]^{\Pi}_{4}$ is a top of $\Pi[10,4]_{2}$.
  \end{enumerate}
\end{exmp}

\begin{cor}
  The set $\langle U]^{\Pi}_{k}$ is a top and a star of $\Pi[n,k]_{q}$ if and only if the only line of ${\mathcal G}_{k}(V)$ containing $\langle U]^{\Pi}_{k}$
  is $[\bigcap_{y\in Y}C(y), U]_k$ and $\langle U]^{\Pi}_{k}=[\bigcap_{y\in Y}C(y)\rangle^{\Pi}_{k}$.
\end{cor}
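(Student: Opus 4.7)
The plan is to deduce this corollary by gluing together Theorem~\ref{projective top} and Theorem~\ref{liney}, together with one additional observation: any star of $\Pi[n,k]_{q}$ that happens to equal $\langle U]^{\Pi}_{k}$ forces the clique into a single line of $\mathcal{G}_k(V)$. This reduces the argument to the ``exactly one line'' branch of the previous theorems, where the characterization has already been worked out.

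For the ``only if'' direction I would start from the assumption that $\langle U]^{\Pi}_{k}=[S\rangle^{\Pi}_{k}$ for some $S\in\mathcal{G}_{k-1}(V)$ (this is what being a star of $\Pi[n,k]_{q}$ means). Every element of $[S\rangle^{\Pi}_{k}$ contains $S$ and every element of $\langle U]^{\Pi}_{k}$ is contained in $U$, so each vertex of the clique lies in the line $[S,U]_k$. Hence $\langle U]^{\Pi}_{k}$ is contained in a line, and Theorem~\ref{liney} gives $\dim\langle Y\rangle\le 2$. The cases $\dim\langle Y\rangle\in\{0,1\}$ must be ruled out: if $\dim\langle Y\rangle=0$ the clique is empty, and if $\dim\langle Y\rangle=1$ the argument already used in Theorem~\ref{projective top} shows $\langle U]^{\Pi}_{k}$ is properly contained in $[S\rangle^{\Pi}_{k}$, contradicting maximality. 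So $\dim\langle Y\rangle=2$, and Theorem~\ref{liney}\ref{dim2} identifies the unique line through $\langle U]^{\Pi}_{k}$ as $[\bigcap_{y\in Y}C(y),U]_k$. Matching this with $[S,U]_k$ forces $S=\bigcap_{y\in Y}C(y)$, delivering both assertions.

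For the ``if'' direction, assume the unique line of $\mathcal{G}_k(V)$ containing $\langle U]^{\Pi}_{k}$ is $[\bigcap_{y\in Y}C(y),U]_k$ and that $\langle U]^{\Pi}_{k}=[\bigcap_{y\in Y}C(y)\rangle^{\Pi}_{k}$. Theorem~\ref{projective top} directly yields that $\langle U]^{\Pi}_{k}$ is a top, i.e.\ a maximal clique of $\Pi[n,k]_{q}$; transporting this maximality across the assumed equality gives that $[\bigcap_{y\in Y}C(y)\rangle^{\Pi}_{k}$ is also a maximal clique, hence a star.

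I do not expect a genuine obstacle here: the corollary is essentially a repackaging of the ``contained in exactly one line'' case of Theorem~\ref{projective top}. The only delicate point is the initial step of the ``only if'' direction, where one needs to argue that the star hypothesis alone forces containment in a line, so that Theorem~\ref{liney} becomes applicable and the degenerate cases $\dim\langle Y\rangle\in\{0,1\}$ can be eliminated.
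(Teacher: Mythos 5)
Your proposal is correct and follows essentially the same route as the paper, which leaves this corollary without a separate proof because it is exactly the ``contained in precisely one line'' branch of the proof of Theorem~\ref{projective top} combined with Theorem~\ref{liney}\ref{dim2}. Your one added observation --- that the star hypothesis $\langle U]^{\Pi}_{k}=[S\rangle^{\Pi}_{k}$ forces the clique into the line $[S,U]_k$, so that Theorem~\ref{liney} applies and the cases $\dim\langle Y\rangle\in\{0,1\}$ can be discarded --- is precisely the bridging step the paper leaves implicit, and it is handled correctly.
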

\begin{exmp}
  Let $M=\left[\begin{matrix}
               1 & 0 & 0 & 0 & 1& 1\\
               0 & 1 & 0 & 0 & 1 & 0 \\
               0 & 0 & 1 & 0 & 0 & 1 \\
               0 & 0 & 0 & 1 & 1 & 1
             \end{matrix}\right]$ be a generator matrix for a projective code $U=[6, 4]_2$. Then $Y=\left\{\left[\begin{matrix}
                                                        0 \\
                                                        1 \\
                                                        1 \\
                                                        1
                                                      \end{matrix}\right],\left[\begin{matrix}
                                                        1 \\
                                                        1 \\
                                                        1 \\
                                                        0
                                                      \end{matrix}\right]\right\}$ and $\langle U]^{\Pi}_{3}=\left\{C(y_1), C(y_2)\right\}$, where $C(y_1)=\langle v_1+v_2+v_4, v_2+v_3, v_1\rangle, C(y_2)=\langle v_1+v_2+v_4, v_2+v_3, v_4\rangle.$ We can write a generator matrix
                                                      for $C(y_1)\cap C(y_2)=\langle v_1+v_2+v_4, v_2+v_3\rangle$ as
                                                       $$M_S=\left[\begin{matrix}
               1 & 1 & 0 & 1 & 1& 0\\
               0 & 1 & 1 & 0 & 1 & 1
             \end{matrix}\right].$$
     A        generator matrix           for any projective code containing $C(y_1)\cap C(y_2)$ can be obtained from $M_S$ by adding a certain row vector in $2^3$ ways since $M_S$       consists of three different pairs of the same columns. This is also the number of vectors in $C(y_1)$ or $C(y_2)$ which are not elements of $C(y_1)\cap C(y_2)$. Hence all  projective codes $[6, 3]_2$ containing $C(y_1)\cap C(y_2)$ are contained in $\langle U]^{\Pi}_{3}$. This means that $\langle U]^{\Pi}_{3}=[C(y_1)\cap C(y_2)\rangle^{\Pi}_{3}$ is a top and a star of $\Pi[6,3]_{2}$.

\end{exmp}

\footnotesize Edyta Bartnicka\\
Institute of Information Technology,
Faculty of Applied Informatics
and Mathematics,
Warsaw University of Life Sciences - SGGW,
Nowoursynowska 166 St.,
02-787 Warsaw,
Poland\\
{\tt edyta\_bartnicka@sggw.edu.pl}

\begin{thebibliography}{00}
\bibitem{B} E. Bartnicka, Stars of graphs of projective codes, arXiv:2412.00842, unpublished results.
%\bibitem{BM} E. Bartnicka, A. Matra\'s, Tops of graphs of non-degenerate linear codes, arXiv:2312.14523, unpublished results.
\bibitem{CGK} I. Cardinali, L. Giuzzi, M. Kwiatkowski, On the Grassmann graph of
linear codes, Finite Fields Appl. 75 (2021) 101895.
 \bibitem{KP2} M. Kwiatkowski, M. Pankov,
Chow's theorem for linear codes, Finite Fields Appl. 46 (2017) 147--162.

\bibitem{KP3} M. Kwiatkowski, M. Pankov, A. Pasini, The graphs of projective codes,
Finite Fields Appl. 54 (2018) 15--29.
\bibitem{KP4} M. Kwiatkowski, M. Pankov, On maximal cliques in the graph of simplex
codes,
J. Geom. 115,\ 10 (2024).
\bibitem{Pankov2023} M. Pankov, The graphs of non-degenerate linear codes, J. Comb. Theory, Ser. A,
195 (2023) 105720.
\end{thebibliography}
\end{document}